\journal{XXXX}
\newtheorem{theorem}{Theorem}[section]
\newtheorem{lemma}{Lemma}[section]
\newtheorem{remark}{Remark}[section]
\begin{document}

\begin{frontmatter}
\title{Comments on lumping the Google matrix
\footnote{This work was supported by National Natural Science Foundation of China (Grant No. 12001363, 71671125, 12001395), Natural Science Foundation of Shanxi province, China (Grant No. 201901D211423).}}

\author[mymainaddress]{Yongxin Dong}
\author[mysecondaryaddress]{Yuehua Feng\corref{mycorrespondingauthor}}
\cortext[mycorrespondingauthor]{Corresponding author}
\ead{yhfeng@sues.edu.cn}
\author[mymainaddress]{Jianxin You}
\author[3]{Jinrui Guan}

\address[mymainaddress]
{\scriptsize School of Economics and Management,
Tongji University, Shanghai 200092, China
}
\address[mysecondaryaddress]
{\scriptsize School of Mathematics, Physics and Statistics,
Shanghai University of Engineering Science, Shanghai 201620, China}

\address[3]{\scriptsize Department of Mathematics, Taiyuan Normal University, Shanxi 030619, China}

\begin{abstract}
On the case that the number of dangling nodes is large, PageRank computation can be proceeded with a much smaller matrix through lumping all dangling nodes of a web graph into a single node. Thus, it saves many computational cost and operations. There are also some theoretical contributions on Jordan canonical form of the Google matrix. Motivated by these theoretical contributions, in this note, we provide alternative proofs for some results of Google matrix through the lumping method due to Ipsen and Selee. Specifically
we find that the result is also suitable for some subsequent work based on lumping dangling nodes into a node. Besides, an entirely new proof from the matrix decomposition viewpoint is also proposed.

\end{abstract}

\begin{keyword}
PageRank, Similarity transformation, Lumping, Dangling node, Google matrix
\MSC[2020] 15A06, 15A18, 15A21
\end{keyword}


\end{frontmatter}

\section{Preliminary}

Google's PageRank is a web link analysis model in the field of modern web information retrieval. Its importance lies in its ranking method. PageRank model
uses adjacency matrices and hyperlink matrices to describe web link structure graph. It is a ranking method that reveals the relative importance of corresponding web pages. The traditionary PageRank model is one of the first order Markov chain applications \cite{Langville2006google}. Based on Markov chain model which concerns a sequence of random variables, the state of a certain system at one time epoch only depends on the previous time epoch state \cite{meyer1989stochastic}. The Google matrix is a stochastic matrix with all its entries nonnegative and all its row sum equals 1. The nonnegative stochastic matrix has a largest eigenvalue 1 which is its spectral radius. For researches on stochastic matrices, we refer to readers to \cite{friedland1979lower,KIRKLAND2006laa,kirkland2009cycle}.
According to the survey in \cite{vigna2016spectral}, this kind of ranking methods belongs to the spectral rankings. In \cite{franceschet2011pagerank}, a short brief history of PageRank is summarized and contained in Table \ref{tab:HistoryPageRank}.
\begin{center}
\begin{table}[htbp]
\centering
\small
\caption{A short history of PageRank}
\label{tab:HistoryPageRank}
\begin{tabular}{llll}
  \hline
  &Year & Author & Contribution \\
\hline
  &1906 & Markov & Markov theory \\
  &1907 & Perron & Perron theorem \\
  &1912 & Frobenius & Perron-Frobenius theorem \\
  \multirow{2}{*}{} &1929 & von Mises and & Power method \\
 & &Pollaczek-Geiringer &\\
  &1941 & Leontief & Econometric mode \\
  &1949 & Seeley & Sociometric model \\
  &1952 & Wei & Sport ranking model \\
  &1953 & Katz & Sociometric model \\
  &1965 & Hubbell & Sociometric model \\
  &1976 & Pinski and Narin & Bibliometric model \\
  &1998 & Kleinberg & HITS \\
  &1998 & Brin and Page & PageRank \\
  \hline
\end{tabular}
\end{table}
\end{center}

As the web is indeed very huge, the web may contain over a billion pages. And a count of the web size increases quickly and dynamically. The stationary probability vector (or principle eigenvector) of a large Markov chain has widely applications. This is due to its nonnegativity and all entries sum 1. However, the principal eigenvector computation of a large Markov chain matrix faces great challenges since the involved computation matrix size can be even over a billion nodes (pages) \cite{Page1998StanfordDL,Brezinski2006simax}. It may take much time (several hours or days) to compute a large PageRank vector. Therefore, faster methods are needed to be studied. The simplest and oldest power methods require constant memory \cite{Berkhin2005IM}, and it converges slowly as the damping factor increases \cite{Langville2006google}. To improve the computational efficiency of PageRank, the lumping methods \cite{Ipsen2007simax,Lin2009computing,FengDong2021bims}, the Arnoldi-type algorithm \cite{Golub2006bit} and other theoretical and numerical results are available, see \cite{yumiaowuwei2012KAP,Dong2017Calcolo,MIAO2020112891,TIAN2021113295}. For the properties of the Google matrix, Horn and Serra-Capizzano and Serra-Capizzano proposed its analytic expression of the Jordan canonical form  \cite{horn2006general,Capizzano2005simax}. However, Wu and Wei further pointed out that the main theory due to Capizzano can be used to estimate the condition number of the PageRank vector as a function of the damping factor $\alpha$. They gave insightful researches on how to minimize the condition number of the PageRank computation problem by choosing a clever scaling matrix \cite{wu2008comments}. As it is mentioned above, the PageRank problem can be accelerated and saved computational cost by lumping all dangling nodes of a web graph into a single node \cite{Ipsen2007simax,lee2003fast}, if the dangling node number is large.

To make sure the uniqueness of stationary probability distribution of a particular matrix, Brin and Page invent a new matrix by taking a linear  combination of two stochastic matrices \cite{Page1998StanfordDL}
\begin{equation}\label{equ:google matrix}
G = \alpha S + (1-\alpha) E,~ S = H + d w^T,~ E = ev^T,~ G\in \mathbb{R}^{n\times n}
\end{equation}
where $w\geq 0,w\in \mathbb{R}^n,~\|w\|_1\equiv w^Te=1$
is the same dangling node vector,
$\alpha\in(0,1)$ is the damping factor,
$v\geq 0,v\in \mathbb{R}^n,\|v\|_1\equiv v^Te=1$ is a personalization vector,
$e$ is the vector of all ones with suitable size,
the entries of the dangling node indicator vector $d$ is defined by
\begin{equation}\label{equ:hijandd}
d_i = \left\{
\begin{array}{l}
1,~~\mbox{if page $i$ has no outlinks}, \\
 0,~~\rm{otherwise,} \\
 \end{array} \right.
\end{equation}
and the web link structure matrix is given by
\begin{equation}
h_{ij} = \left\{ \begin{array}{l}
 \frac{1}{n_i},~~ \mbox{a nonzero integer $n_i$ stands for the number of outlinks of page $i$ to page $j$}, \\ 
 0,~~\rm{otherwise}, \\
 \end{array} \right.
 \end{equation}
Not all web pages has links to other web pages, if web pages has no outlinks (i.e. pdf, image files, some web pages with on links to other pages), web pages are called dangling nodes; otherwise they are nondangling nodes. The PageRank vector $\pi$ is defined by computing the principle eigenvector of the Google matrix $G$
\begin{equation}\label{equ:pagerank problemROWCOL}
	\pi^T = \pi^T G, ~\mbox{or} ~\\
	\pi = G^T \pi,~\mbox{with}~ \pi\geq 0, \|\pi\|_1 = 1.
\end{equation}

The Google matrix $G$ is a primitive stochastic matrix \cite{meyer1989stochastic}, thus it is irreducible and aperiodic. This property can ensure the existence and uniqueness of the nonnegative dominant eigenvector due to the ranking matrix. For details, see Markov chains or Perron-Frobenius theorem \cite{Golub2006bit,horn1994topics}. By the lumping Google matrix method, Ipsen and Selee analyzed the relationship between rankings of nondangling nodes and rankings of dangling nodes \cite{Ipsen2007simax}. To further demonstrate the ranking relationship between nondangling nodes and dangling nodes during PageRank computation, we try to make theoretical contributions for clarifying their ranking relationship. Consequently, in this paper, from an aspect of theoretical analysis, we derive alternative proofs for some results on lumping the Google matrix.

\section{Lumping and related theorems}

In this section, we first review the lumping in matrix term which is given by Ipsen and Selee in \cite{Ipsen2007simax}. Let $P$ be a permutation matrix and
\begin{align*}
PMP^T=\begin{bmatrix}
M_{11}&\cdots&M_{1,k+1}\\
\vdots&\ddots&\vdots\\
M_{k+1,1}&\cdots&M_{k+1,k+1}
\end{bmatrix}
\end{align*}
be a partition of a stochastic matrix $M$. Then we call $M$ is lumpable with respect to this partition if each vector $M_{ij}e$ is a multiple of $e$ $(e=\begin{bmatrix}1 &\cdots &1\end{bmatrix}^T)$, $i\neq j$, $1\leq i,j \leq k+1$. The Google matrix is said to be lumpable if all dangling nodes are lumped into a single node \cite{Ipsen2007simax,lee2003fast}.

For ease of illustration, the related theorems proposed by Ipsen and Selee \cite{Ipsen2007simax} are reviewed in the following discussions.
Suppose that $G \in \mathbb{R}^{n\times n}$ has $k$ nondangling nodes and $n-k$ dangling nodes, thus there exists a permutation matrix $\Pi\in \mathbb{R}^{n\times n}$, such that
\begin{align*}
\widetilde{G} = \Pi G \Pi^T.
\end{align*}
To find the permutation matrix $\Pi$, we perform the following steps.
For a row stochastic web link matrix $H$, we first compute $h=He$, where $e=\begin{bmatrix}1&1&\cdots&1\end{bmatrix}^T$. Then in exact arithmetic, $h$ is a nonnegative vector with entries $1$ or $0$. Hence, if we define a dangling node set $\text{D}=\{i|h_i=0 \}$, and a nondangling node set $\text{ND}=\{i |h_i=1 \}$. A suitable size identity matrix is denoted by $I$. The proper permutation matrix $\Pi$ can be defined by $\Pi=I([\text{ND},\text{D}],:)$ (in Matlab notation). Therefore, in numerical simulations, we first collect the index $i$ which satisfies $h_i=0$, and make a notation $\text{D}=\{i |h_i=0 \}$. The other indices ($\{1,2,\cdots,n\}\backslash\text{D}$) belongs to the set ND. This manipulation can avoid some practical numerical problems, such as the rounding off error. This is due to the fact that an element of $d$ may approximate 1, be not exactly 1, say, $0.999$, during double precision computation.
From \eqref{equ:google matrix}, we have
\begin{align*}
\widetilde{G}=&\alpha \Pi \left( H  + d w^T\right)\Pi^T + \left(1-\alpha\right)\Pi ev^T\Pi^T=
\alpha\left( \widetilde{H}+\widetilde{d} w^T\right)+ \left(1-\alpha\right)ev^T
\\
=&\alpha \widetilde{S}+(1-\alpha)ev^T,
\end{align*}
where $\widetilde{H} = \begin{bmatrix} \widetilde{H}_{11} & \widetilde{H}_{12} \\ 0 & 0 \end{bmatrix},\quad \widetilde{d} = \begin{bmatrix} 0\\ e \end{bmatrix}\in \mathbb{R}^{n},\quad w = \begin{bmatrix} w_1\\ w_2\end{bmatrix}\in \mathbb{R}^{n},\quad v = \begin{bmatrix} v_1\\ v_2\end{bmatrix}\in \mathbb{R}^{n},\quad u= \begin{bmatrix} u_1\\ u_2\end{bmatrix}\in \mathbb{R}^{n}
$, $\widetilde{H}_{11} \in \mathbb{R}^{k\times k}$,
$\widetilde{H}_{12}\in \mathbb{R}^{k\times (n-k)}$, $u_1 \in \mathbb{R}^{ k }$,
$u_2 \in \mathbb{R}^{ n-k  }$, $u=\alpha w+\left(1-\alpha\right)v$, and $e$ is a column vector of all ones with suitable size.
In this way, the indices of pages are reordered. So it is clear that 
\begin{align}\label{equ:G22block}
\widetilde{G}
=&
\alpha
\begin{bmatrix}
\widetilde{H}_{11} & \widetilde{H}_{12} \\
e w_1^T & e w_2^T \end{bmatrix}+
\left(1-\alpha\right)
\begin{bmatrix} ev_1^T & ev_2^T \\
ev_1^T & ev_2^T \end{bmatrix}\nonumber \\
=&
\begin{bmatrix}
\alpha\widetilde{H}_{11}+(1-\alpha)ev_1^T
&\alpha \widetilde{H}_{12}+(1-\alpha)ev_2^T \\
e (\alpha w_1^T+(1-\alpha)v_1) & e(\alpha w_2^T+(1-\alpha)v_2 )
\end{bmatrix}\nonumber \\
=&
\begin{bmatrix}\widetilde{G}_{11} & \widetilde{G}_{12} \\
e u_1^T & e u_2^T \end{bmatrix}.
\end{align}
Moreover, if
\begin{align}
\widetilde{\pi}^T =  \widetilde{\pi}^T \widetilde{G} = \widetilde{\pi}^T \Pi G \Pi^T,\quad \mbox{where $\widetilde{\pi}\geq 0, \|\widetilde{\pi}\|_1 = 1$},
\end{align}
then the PageRank vector corresponding to \eqref{equ:pagerank problemROWCOL} is
\begin{equation*}
	\pi^T = \widetilde{\pi}^T \Pi.
\end{equation*}

By using the lumping method (or similarity transformation) and the similarity transformation matrix
\begin{align*}
L=I_{n-k}-\frac{1}{n-k}\widehat{e}e^T,\quad\mbox{with}\quad \widehat{e}=e-e_1=\begin{bmatrix}0,1,\cdots,1\end{bmatrix}\in \mathbb{R}^{n-k},
\end{align*}
where $I_{n-k}=\begin{bmatrix}e_1\cdots e_{n-k}\end{bmatrix}$ denotes the identity matrix of order $n-k$, and $e_i(i=1,2,\cdots,n-k)$ is its $i-$th column vector. Ipsen and Selee obtained the following theorems in \cite{Ipsen2007simax}.
They showed the relationship between the PageRank $\widetilde{\pi}$ of $\widetilde{G}\in \mathbb{R}^{n\times n}$
and the stationary distribution $\sigma$ of $\widetilde{G}^{(1)}\in \mathbb{R}^{(k+1)\times (k+1)}$.
For details of the analytic induction, see \cite{Ipsen2007simax}.

\begin{theorem}
\cite{Ipsen2007simax}\label{thm:ipsen1}
With the above notation, let
\begin{equation}
X=\begin{bmatrix}
I_k& 0\\
0& L
\end{bmatrix},\quad \mbox{where} \quad L = I_{n-k} - \frac{1}{n-k}\widehat{e}e^T \quad \mbox{and} \quad
\widehat{e} = e - e_1= \begin{bmatrix} 0 &1 &\cdots &1 \end{bmatrix}^T.
\end{equation}
Then
$X\widetilde{G}X^{-1} =
\begin{bmatrix} \widetilde{G}^{(1)} & \widetilde{G}^{(2)} \\
0 & 0\end{bmatrix},$ where
\begin{equation}\label{equ:G1G2}
 \widetilde{G}^{(1)}  =
\begin{bmatrix}\widetilde{G}_{11} & \widetilde{G}_{12}e \\
u_1^T & u_2^T e \end{bmatrix}\quad \mbox{and} \quad \widetilde{G}^{(2)}=\begin{bmatrix} \widetilde{G}_{12} \left(I+\widehat{e}e^T \right)\\ u_2^T\left(I+\widehat{e}e^T \right) \end{bmatrix}
\begin{bmatrix}e_2 &e_3 &\cdots &e_{n-k} \end{bmatrix}.
\end{equation}
The stochastic matrix $\widetilde{G}^{(1)}$ of order $k+1$ has the same nonzero eigenvalues as $\widetilde{G}$.
\end{theorem}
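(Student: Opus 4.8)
The plan is to verify the asserted similarity transformation by a direct block computation resting on two elementary identities for $L$. First I would note that $X$ is block diagonal, so $X^{-1} = \begin{bmatrix} I_k & 0 \\ 0 & L^{-1} \end{bmatrix}$, and that $L = I_{n-k} - \frac{1}{n-k}\widehat{e}e^T$ is a rank-one perturbation of the identity; applying the Sherman--Morrison formula together with $e^T\widehat{e} = e^T(e-e_1) = (n-k)-1$ gives $L^{-1} = I_{n-k} + \widehat{e}e^T$. I would also record the crucial identity $Le = \bigl(I_{n-k} - \tfrac{1}{n-k}\widehat{e}e^T\bigr)e = e - \widehat{e} = e_1$, along with the companion fact $(I_{n-k}+\widehat{e}e^T)e_1 = e_1 + \widehat{e} = e$.

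Next, using the block form $\widetilde{G} = \begin{bmatrix} \widetilde{G}_{11} & \widetilde{G}_{12} \\ e u_1^T & e u_2^T \end{bmatrix}$ from \eqref{equ:G22block}, I would carry out
\[
X\widetilde{G}X^{-1}
= \begin{bmatrix} \widetilde{G}_{11} & \widetilde{G}_{12}L^{-1} \\ (Le)u_1^T & (Le)\,u_2^T L^{-1} \end{bmatrix}
= \begin{bmatrix} \widetilde{G}_{11} & \widetilde{G}_{12}(I_{n-k}+\widehat{e}e^T) \\ e_1 u_1^T & e_1\, u_2^T(I_{n-k}+\widehat{e}e^T) \end{bmatrix}.
\]
Since $e_1 u_1^T$ and $e_1\, u_2^T(I_{n-k}+\widehat{e}e^T)$ have every row except the first equal to zero, rows $k+2,\dots,n$ of $X\widetilde{G}X^{-1}$ vanish, which already yields the block-triangular shape $\begin{bmatrix} \widetilde{G}^{(1)} & \widetilde{G}^{(2)} \\ 0 & 0 \end{bmatrix}$ with the top-left block of order $k+1$.

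It then remains to read off $\widetilde{G}^{(1)}$ and $\widetilde{G}^{(2)}$. For the leading $k+1$ columns I would use $(I_{n-k}+\widehat{e}e^T)e_1 = e$ to see that the $(k{+}1)$-st column of the top-left part is $\widetilde{G}_{12}e$ and its $(k{+}1,k{+}1)$ entry is $u_2^Te$, so the leading $(k+1)\times(k+1)$ block is exactly $\widetilde{G}^{(1)} = \begin{bmatrix} \widetilde{G}_{11} & \widetilde{G}_{12}e \\ u_1^T & u_2^Te \end{bmatrix}$; selecting the remaining columns $2,\dots,n-k$ is right-multiplication by $\begin{bmatrix} e_2 & \cdots & e_{n-k} \end{bmatrix}$, which reproduces the stated $\widetilde{G}^{(2)}$. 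Finally, the spectral claim follows from similarity invariance of the spectrum together with the fact that a block upper-triangular matrix has spectrum equal to the union of the spectra of its diagonal blocks, namely $\widetilde{G}^{(1)}$ and the $(n-k-1)\times(n-k-1)$ zero block; hence $\widetilde{G}$ and $\widetilde{G}^{(1)}$ have the same nonzero eigenvalues. Stochasticity of $\widetilde{G}^{(1)}$ comes from the row sums $\widetilde{G}_{11}e + \widetilde{G}_{12}e = e$ and $u_1^Te + u_2^Te = u^Te = \alpha + (1-\alpha) = 1$.

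I do not expect a genuine obstacle here: once $L^{-1} = I_{n-k}+\widehat{e}e^T$ and $Le = e_1$ are in hand the rest is mechanical, and the only point requiring care is consistent bookkeeping of the three block sizes $k$, $1$ and $n-k-1$ when passing from the $k,(n-k)$ partition of $\widetilde{G}$ to the $(k{+}1),(n-k-1)$ partition of $X\widetilde{G}X^{-1}$.
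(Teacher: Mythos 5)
Your proof is correct and takes essentially the same approach as the paper: a direct block computation of $X\widetilde{G}X^{-1}$ driven by the identities $Le=e_1$ and $L^{-1}e_1=e$, followed by reading off the leading $(k+1)\times(k+1)$ block and invoking similarity plus block triangularity for the spectral claim. The only difference is cosmetic: you work with the concrete $L$ and compute $L^{-1}=I_{n-k}+\widehat{e}e^T$ via Sherman--Morrison, whereas the paper (Lemma 3.2 and Section 4.1) runs the same computation for an arbitrary invertible $\widehat{L}$ satisfying $\widehat{L}e=e_1$ and never needs the explicit inverse.
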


\begin{theorem}
\cite{Ipsen2007simax}\label{thm:ipsen2}
With the above notation, let
\begin{align}\label{eqn:lumpedHatG(1)}
\sigma^T \begin{bmatrix}\widetilde{G}_{11} & \widetilde{G}_{12}e \\ u_1^T & u_2^T e \end{bmatrix} =
\sigma^T, \sigma \geq 0, \|\sigma\|_1 = 1,
\end{align}
and partition $\sigma^T = \begin{bmatrix} \sigma^T_{1:k} & \sigma_{k+1}\end{bmatrix}$,
where $\sigma_{k+1}$ is a scalar.
Then the PageRank vector of $\widetilde{G}$ equals
\begin{equation}\label{eqn:piT}
\widetilde{\pi}^T =
\begin{bmatrix} \sigma^T_{1:k} & \sigma^T
\begin{bmatrix}\widetilde{G}_{12} \\ u_2^T\end{bmatrix}\end{bmatrix},\quad \mbox{where} \quad
\widetilde{G}_{12} =
\alpha \widetilde{H}_{12}+(1-\alpha)e v_2^T.
\end{equation}

\end{theorem}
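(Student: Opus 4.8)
The plan is to verify the formula \eqref{eqn:piT} directly, exploiting the uniqueness of the PageRank vector. Since $\widetilde{G}=\Pi G\Pi^T$ is permutation-similar to the primitive stochastic matrix $G$, the left eigenvector of $\widetilde{G}$ for the eigenvalue $1$ is unique up to scaling; so it suffices to exhibit one vector of the shape in \eqref{eqn:piT} that is a left eigenvector of $\widetilde{G}$ for the eigenvalue $1$, is nonnegative, and has unit $\ell_1$-norm. To this end I would set
\begin{equation*}
p^T:=\sigma^T\begin{bmatrix}\widetilde{G}_{12}\\ u_2^T\end{bmatrix},\qquad \widetilde{\pi}^T:=\begin{bmatrix}\sigma_{1:k}^T & p^T\end{bmatrix},
\end{equation*}
and show $\widetilde{\pi}^T\widetilde{G}=\widetilde{\pi}^T$, $\widetilde{\pi}\ge 0$, and $\|\widetilde{\pi}\|_1=1$.

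First I would unpack \eqref{eqn:lumpedHatG(1)}. Writing $\sigma^T=\begin{bmatrix}\sigma_{1:k}^T & \sigma_{k+1}\end{bmatrix}$ and multiplying out $\sigma^T\widetilde{G}^{(1)}=\sigma^T$ with $\widetilde{G}^{(1)}$ as in \eqref{equ:G1G2} yields
\begin{align*}
\sigma_{1:k}^T\widetilde{G}_{11}+\sigma_{k+1}u_1^T &= \sigma_{1:k}^T,\\
\sigma_{1:k}^T\widetilde{G}_{12}e+\sigma_{k+1}u_2^Te &= \sigma_{k+1}.
\end{align*}
The second equation says exactly that $p^Te=\sigma_{1:k}^T\widetilde{G}_{12}e+\sigma_{k+1}u_2^Te=\sigma_{k+1}$, i.e. the mass the candidate $\widetilde{\pi}$ places on the whole dangling block coincides with the mass $\sigma$ placed on the single lumped node. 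Recognizing this ``mass-conservation'' identity is, to my mind, the only real content of the argument; everything else is block bookkeeping. Then, using the $2\times 2$ block form of $\widetilde{G}$ from \eqref{equ:G22block},
\begin{equation*}
\widetilde{\pi}^T\widetilde{G}=\begin{bmatrix}\sigma_{1:k}^T\widetilde{G}_{11}+(p^Te)\,u_1^T & \sigma_{1:k}^T\widetilde{G}_{12}+(p^Te)\,u_2^T\end{bmatrix}=\begin{bmatrix}\sigma_{1:k}^T\widetilde{G}_{11}+\sigma_{k+1}u_1^T & \sigma_{1:k}^T\widetilde{G}_{12}+\sigma_{k+1}u_2^T\end{bmatrix},
\end{equation*}
whose first block is $\sigma_{1:k}^T$ by the first equation above and whose second block is $\sigma^T\begin{bmatrix}\widetilde{G}_{12}\\ u_2^T\end{bmatrix}=p^T$ by definition; hence $\widetilde{\pi}^T\widetilde{G}=\widetilde{\pi}^T$.

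It then remains to check nonnegativity and normalization. Nonnegativity is immediate: $\sigma\ge 0$ gives $\sigma_{1:k}\ge 0$, and $\widetilde{G}_{12}\ge 0$, $u_2\ge 0$ (both are formed from $\widetilde{H}_{12},v_2,w_2\ge 0$ by convex combinations) give $p\ge 0$. For the norm, $\|\widetilde{\pi}\|_1=\sigma_{1:k}^Te+p^Te=(\|\sigma\|_1-\sigma_{k+1})+\sigma_{k+1}=\|\sigma\|_1=1$, again using $p^Te=\sigma_{k+1}$. By uniqueness of the left Perron eigenvector of $\widetilde{G}$, this $\widetilde{\pi}$ must be the PageRank vector of $\widetilde{G}$, which is precisely \eqref{eqn:piT}.

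Finally, I would note that one can also reach \eqref{eqn:piT} through Theorem~\ref{thm:ipsen1} without the block computation above: lift $\sigma$ to $y^T:=\begin{bmatrix}\sigma^T & \sigma^T\widetilde{G}^{(2)}\end{bmatrix}$, which by Theorem~\ref{thm:ipsen1} is a left eigenvector of $X\widetilde{G}X^{-1}$ for the eigenvalue $1$; then $y^TX$ is a left eigenvector of $\widetilde{G}$, so $\widetilde{\pi}^T$ is the $\ell_1$-normalization of $y^TX=\begin{bmatrix}\sigma_{1:k}^T & \begin{bmatrix}\sigma_{k+1} & \sigma^T\widetilde{G}^{(2)}\end{bmatrix}L\end{bmatrix}$. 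On this route the fiddly part is simplifying the second block: one substitutes the explicit $\widetilde{G}^{(2)}$ from \eqref{equ:G1G2} together with $L=I_{n-k}-\frac{1}{n-k}\widehat{e}e^T$ and plays the factor $(I+\widehat{e}e^T)$ inside $\widetilde{G}^{(2)}$ off against $L$ (the useful elementary facts being $Le=e_1$ and $e^T\widehat{e}=n-k-1$) so that the $\widehat{e}e^T$-type terms cancel and the block collapses to $p^T$; one also checks that $y^TX$ already has unit $\ell_1$-norm, so no rescaling is needed. That single simplification of $y^TX$ is the only genuinely tedious computation on the second route, and it is the step I would expect to cost the most effort.
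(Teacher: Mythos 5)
Your main argument is correct, and it is a genuinely different (and more elementary) route than either proof in the paper. You verify \eqref{eqn:piT} directly: you define the candidate $\widetilde{\pi}^T=\begin{bmatrix}\sigma_{1:k}^T & \sigma^T\begin{bmatrix}\widetilde{G}_{12}\\ u_2^T\end{bmatrix}\end{bmatrix}$, extract the two block equations from \eqref{eqn:lumpedHatG(1)}, observe the mass-conservation identity $p^Te=\sigma_{k+1}$, and check the eigenvector equation against the $2\times 2$ block form \eqref{equ:G22block} of $\widetilde{G}$, finishing with nonnegativity, normalization, and uniqueness of the Perron vector. The paper instead goes the other way around: its first proof pulls the lumped eigenvector $\begin{bmatrix}\sigma^T & \sigma^T\widetilde{G}^{(2)}\end{bmatrix}$ back through the similarity transformation $\widehat{X}$ and must then show, via a partitioned computation with $\widehat{L}$ and the explicit formula for $\widetilde{G}^{(2)}$, that the trailing block collapses to $\sigma^T\begin{bmatrix}\widetilde{G}_{12}\\ u_2^T\end{bmatrix}$; its second proof goes through the block LDU factorization and the stochastic complement. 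Your sketched second route is essentially the paper's first proof (with the original $L$ of Ipsen and Selee rather than the generalized $\widehat{L}$), and your assessment of where the tedium lies there is accurate. What your direct verification buys is that it needs neither Theorem \ref{thm:ipsen1} nor any choice of transformation matrix, and it actually closes a small gap the paper leaves open: the paper concludes only that $\widehat{\pi}=\widetilde{\pi}$ \emph{if} $e^T\widehat{\pi}=1$, whereas you prove $\|\widetilde{\pi}\|_1=1$ (and $\widetilde{\pi}\ge 0$) outright from $p^Te=\sigma_{k+1}$. What it gives up is the structural insight of the lumping itself — the similarity-transformation proof explains \emph{why} such a $\sigma$ determines $\widetilde{\pi}$, while yours only confirms the formula once it is guessed.
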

As it was stated in \cite{Ipsen2007simax}, the leading $k$ elements of $\widetilde{\pi}$ represent the PageRank associated with the nondangling nodes, and the trailing $n-k$ elements stand for the PageRank of the dangling nodes. Motivated by validating the analytic relationship between them, in this paper, we derive alternative proofs for lumping the PageRank problem.

\section{Proposed transformation matrices}

In the section, we develop an alternative proof in \cite{Ipsen2007simax}
based on a new similarity transformation matrix. They employ the matrix \begin{align}\label{equ:X}
X=
\begin{bmatrix}
I_k&0\\
0&L
\end{bmatrix},
L=I_{n-k}-\frac{1}{n-k}\widehat{e}e^T,\widehat{e} = e - e_1= \begin{bmatrix} 0& 1 &\cdots& 1\end{bmatrix}^T,
\end{align}
where $e$ is a vector of all ones and $e_1$ is the first column of a suitable size identity matrix. In Theorem \ref{thm:ipsen1}, if we employ the following matrix
\begin{align}\label{equ:widetildeX}
\widetilde{X}=
\begin{bmatrix}
I_k&0\\
0&\widetilde{L}
\end{bmatrix},
\widetilde{L}=I_{n-k}-\widehat{e}e_1^T,\widehat{e} = e - e_1= \begin{bmatrix} 0& 1 &\cdots& 1\end{bmatrix}^T,
\end{align}
then we will show you that the related theorems also holds. For comparing \eqref{equ:X} and \eqref{equ:widetildeX}, we separate the leading 1 row and column of $\widetilde{L}$
\begin{align}\label{eq:tildeL}
\widetilde{L}=\begin{bmatrix}
1&0\\
-e&I_{n-k-1}
\end{bmatrix},
\end{align}
and partition $L$ conformally with $\widetilde{L}$,
\begin{align}
L=\begin{bmatrix}
1&0\\
-\frac{1}{n-k}e&I_{n-k-1}-\frac{1}{n-k}ee^T
\end{bmatrix}.
\end{align}
Specifically, the new $(n-k)\times(n-k)$ matrix $$\widetilde{L}=
\begin{bmatrix}
1 &0&0&\cdots&0 \\
-1&1&0&\cdots&0\\
-1&0&1&\cdots&0\\
\vdots&\vdots&\ddots&\ddots&\vdots\\
-1&0&\cdots&0&1\end{bmatrix}
$$
is triangular and relatively sparse, while the $(n-k)\times (n-k)$ matrix 
$$L=\begin{bmatrix}
1 &0&0&\cdots&0 \\
-\frac{1}{n-k}&\frac{n-k-1}{n-k}&-\frac{1}{n-k}&\cdots  &-\frac{1}{n-k}\\
-\frac{1}{n-k}&-\frac{1}{n-k}&\frac{n-k-1}{n-k}&\cdots  &-\frac{1}{n-k}\\
\vdots&\vdots&\ddots&\ddots&\vdots\\
-\frac{1}{n-k}&-\frac{1}{n-k}&-\frac{1}{n-k}&\cdots&\frac{n-k-1}{n-k}\end{bmatrix}$$
is a relative dense matrix. The 2-by-2 block of $\widetilde{L}$ is the identity matrix, while the 2-by-2 block of $L$ is a typical dense matrix. Inspired and motivated by this property, we consider replacing $L$ with $\widetilde{L}$ in $X$ so that an alternative proof process can be presented. Thus, the corresponding proof process can be simplified and shortened. Moreover, we also find that if we take $$\widetilde{L}=I_{n-k}-J_{n-k}(0,n-k)=\begin{bmatrix}1 &0&0&\cdots&0 \\
-1&1&0&\cdots&0\\
0&-1&1&\cdots&0\\
\vdots&\vdots&\ddots&\ddots&\vdots\\
0&0&\cdots&-1&1\end{bmatrix},$$
in \eqref{equ:widetildeX}, where $J_{n-k}(0,n-k)$ is a lower Jordan block of order $n-k$ with diagonal entries $0$, the theorems and the process of the above theorems are also holds. That is to say, the similarity transformation matrix exists but is not unique at all. As a result, in this paper, we generalize the condition of the above theorems by generalizing the similarity transformation matrix condition. Instead of the matrix $L$ which is defined in \cite{Ipsen2007simax}, we propose a class of invertible matrices which satisfy the following condition
\begin{align}\label{eqn:L-condition}
\widehat{L}e=e_1,\mbox{where $\widehat{L}$ is an invertible matrix of order $n-k$}.
\end{align}
Note that $\widehat{L}$ contains specific matrices $\widetilde{L}$ or $L$.  
If $\widehat{L}=L$, then the condition \eqref{eqn:L-condition} becomes the condition for the original theorem and proof process due to Ipsen and Selee. Before our alternative proofs, we propose the following Lemma.

\begin{lemma}\label{lem:widetildeG}
With the above notation, let
\begin{align*}
\widetilde{G}
=
\begin{bmatrix}\widetilde{G}_{11} & \widetilde{G}_{12} \\
e u_1^T & e u_2^T \end{bmatrix},
\end{align*}
which is defined in \eqref{equ:G22block}, and
$\widetilde{X}=
\begin{bmatrix}
I_k&0\\
0&\widetilde{L}
\end{bmatrix},$ where $\widetilde{L}=I_{n-k}-\widehat{e}e_1^T$
then
\begin{align}\label{equ:widetildeXGX-1}
\widetilde{X}\widetilde{G}\widetilde{X}^{-1}=
\begin{bmatrix}
\widetilde{G}_{11} &\widetilde{G}_{12}\widetilde{L}^{-1}\\
\widetilde{L}eu_1^T&\widetilde{L}eu_2^T\widetilde{L}^{-1}
\end{bmatrix}
=\begin{bmatrix}
\widetilde{G}_{11}& \widetilde{G}_{12}e&\widetilde{G}_{12}(I+\hat{e}e_1^T)\begin{bmatrix}e_2& \cdots& e_{n-k}\end{bmatrix}\\
u_1^T& u_2^Te &u_2^T(I+\hat{e}e_1^T)\begin{bmatrix}e_2 &\cdots&e_{n-k}\end{bmatrix}\\
0&0&0
\end{bmatrix}.
\end{align}

\end{lemma}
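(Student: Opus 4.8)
The plan is to reduce the whole computation to two elementary identities for the triangular factor $\widetilde{L}=I_{n-k}-\widehat{e}\,e_1^T$ and then perform a $2\times2$ block multiplication. The first identity is a closed form for its inverse: since the leading entry of $\widehat{e}$ is zero, $e_1^T\widehat{e}=0$, hence $(\widehat{e}\,e_1^T)^2=\widehat{e}\,(e_1^T\widehat{e})\,e_1^T=0$, so $\widetilde{L}$ is unipotent and
\[
\widetilde{L}^{-1}=I_{n-k}+\widehat{e}\,e_1^T .
\]
The second identity records that $\widetilde{L}$ satisfies condition \eqref{eqn:L-condition}: since $e_1^Te=1$ and $e-\widehat{e}=e_1$, we get $\widetilde{L}e=(I_{n-k}-\widehat{e}\,e_1^T)e=e-\widehat{e}=e_1$ and, dually, $\widetilde{L}^{-1}e_1=e_1+\widehat{e}=e$. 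Note also that $\widehat{e}\,e_1^T$ has a single nonzero column (the first), so $\widetilde{L}^{-1}$ differs from the identity only in that column.

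With these in hand the first claimed equality is immediate: using $\widetilde{X}^{-1}=\begin{bmatrix}I_k&0\\0&\widetilde{L}^{-1}\end{bmatrix}$ and multiplying the three block matrices leaves the $(1,1)$ block untouched while $\widetilde{L}$ meets $e$ in both lower blocks, giving
\[
\widetilde{X}\widetilde{G}\widetilde{X}^{-1}=\begin{bmatrix}\widetilde{G}_{11}&\widetilde{G}_{12}\widetilde{L}^{-1}\\ \widetilde{L}e\,u_1^T&\widetilde{L}e\,u_2^T\widetilde{L}^{-1}\end{bmatrix}.
\]
For the second equality I would substitute $\widetilde{L}e=e_1$ and $\widetilde{L}^{-1}=I+\widehat{e}\,e_1^T$ and re-partition with the row and column split $k+1+(n-k-1)$. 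The lower-left block $\widetilde{L}e\,u_1^T=e_1u_1^T$ is supported on its first row only, equal to $u_1^T$; the lower-right block $\widetilde{L}e\,u_2^T\widetilde{L}^{-1}=e_1u_2^T(I+\widehat{e}\,e_1^T)$ is likewise supported on its first row, with $(k+1,k+1)$-entry $u_2^T\widetilde{L}^{-1}e_1=u_2^Te$ and remaining entries $u_2^T(I+\widehat{e}\,e_1^T)\begin{bmatrix}e_2&\cdots&e_{n-k}\end{bmatrix}$. In the same way the $(k+1)$-st column of $\widetilde{G}_{12}\widetilde{L}^{-1}$ is $\widetilde{G}_{12}\widetilde{L}^{-1}e_1=\widetilde{G}_{12}e$, while the trailing columns equal $\widetilde{G}_{12}(I+\widehat{e}\,e_1^T)\begin{bmatrix}e_2&\cdots&e_{n-k}\end{bmatrix}$. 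Since both lower blocks are supported on their first rows, rows $k+2,\dots,n$ of the transformed matrix vanish, and collecting the pieces gives exactly the right-hand side of \eqref{equ:widetildeXGX-1}.

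I do not expect any genuine obstacle: the argument is a direct calculation once $\widetilde{L}^{-1}$ is written down. The one point needing care is the block bookkeeping — recognizing that $e_1u_1^T$ and $e_1u_2^T\widetilde{L}^{-1}$ are rank one with only a first nonzero row (which is what creates the zero block row at the bottom) and checking the scalar identity $\widetilde{L}^{-1}e_1=e$ that converts the $(k+1)$-st row and column into the lumped quantities $u_2^Te$ and $\widetilde{G}_{12}e$. Everything else is forced by the nilpotency $e_1^T\widehat{e}=0$, which keeps $\widetilde{L}^{-1}$, and hence every entry of $\widetilde{X}\widetilde{G}\widetilde{X}^{-1}$, explicit; this is also precisely the feature that makes the present argument shorter than the one using the dense factor $L$ of \cite{Ipsen2007simax}.
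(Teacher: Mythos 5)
Your proposal is correct and follows essentially the same route as the paper: a direct $2\times 2$ block multiplication followed by repartitioning with $e_1$ and $\begin{bmatrix}e_2&\cdots&e_{n-k}\end{bmatrix}$, using the identities $\widetilde{L}e=e_1$ and $\widetilde{L}^{-1}e_1=e$. The only addition is your explicit nilpotency argument ($e_1^T\widehat{e}=0$, hence $\widetilde{L}^{-1}=I+\widehat{e}e_1^T$), which the paper uses without justification.
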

\begin{proof}
The proof is straightforward by computing expression. By direct computation, we have
\begin{align*}
\widetilde{X}\widetilde{G}\widetilde{X}^{-1}=&
\begin{bmatrix}
\widetilde{G}_{11} &\widetilde{G}_{12}\widetilde{L}^{-1}\\
\widetilde{L}eu_1^T&\widetilde{L}eu_2^T\widetilde{L}^{-1}
\end{bmatrix} \\
=&\begin{bmatrix}
\widetilde{G}_{11}&\widetilde{G}_{12}\widetilde{L}^{-1}e_1
&\widetilde{G}_{12}\widetilde{L}^{-1}\begin{bmatrix}e_2&\cdots&e_{n-k}\end{bmatrix}\\
e_1^T\widetilde{L}eu_1^T &e_1^T\widetilde{L}eu_2^T\widetilde{L}^{-1}e_1
&e_1^T\widetilde{L}eu_2^T\widetilde{L}^{-1}
\begin{bmatrix}e_2&\cdots&e_{n-k}\end{bmatrix}\\
\begin{bmatrix}e_2^T\\ \vdots\\ e_{n-k}^T\end{bmatrix}\widetilde{L}eu_1^T
&\begin{bmatrix}e_2^T\\ \vdots\\ e_{n-k}^T\end{bmatrix}
\widetilde{L}eu_2^T\widetilde{L}^{-1}e_1
&\begin{bmatrix}e_2^T\\ \vdots\\ e_{n-k}^T\end{bmatrix}
\widetilde{L}eu_2^T\widetilde{L}^{-1}\begin{bmatrix}e_2
&\cdots&e_{n-k}\end{bmatrix}
\end{bmatrix}.
\end{align*}
Using the fact that $\widetilde{L}e=e_1$ and $\widetilde{L}^{-1}e_1=\left(I+\widehat{e}e_1^T\right)e=e$, we derive \eqref{equ:widetildeXGX-1}.
\end{proof}

We further confirm that if an invertible matrix $\widehat{L}$ satisfies $\widehat{L}e=e_1$, then the above conclusion also holds. Hence, we have the following Lemma.
\begin{lemma}\label{lem:widetildeG-gen}
With the above notation,
let \begin{align*}
\widetilde{G}
=
\begin{bmatrix}\widetilde{G}_{11} & \widetilde{G}_{12} \\
e u_1^T & e u_2^T \end{bmatrix},
\end{align*}
which is defined in \eqref{equ:G22block}, and
$\widehat{X}=
\begin{bmatrix}
I_k&0\\
0&\widehat{L}
\end{bmatrix},$ where the invertible matrix $\widehat{L}$ satisfies $\widehat{L}e=e_1$, then
\begin{align}\label{equ:widetildeXGX-1-2}
\widehat{X}\widetilde{G}\widehat{X}^{-1}=&
\begin{bmatrix}
\widetilde{G}_{11} &\widetilde{G}_{12}\widehat{L}^{-1}\\
\widehat{L}eu_1^T&\widetilde{L}eu_2^T\widehat{L}^{-1}
\end{bmatrix}\nonumber \\
=&\begin{bmatrix}
\widetilde{G}_{11}& \widetilde{G}_{12}e&\widetilde{G}_{12}\widehat{L}^{-1}\begin{bmatrix}e_2& \cdots& e_{n-k}\end{bmatrix}\\
u_1^T& u_2^Te &u_2^T\widehat{L}^{-1}\begin{bmatrix}e_2 &\cdots&e_{n-k}\end{bmatrix}\\
0&0&0
\end{bmatrix}
\end{align}

\end{lemma}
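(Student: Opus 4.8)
The plan is to follow the proof of Lemma~\ref{lem:widetildeG} line by line, isolating the one place where the explicit form of $\widetilde{L}$ was used and replacing it by the sole hypothesis we now have, namely $\widehat{L}e=e_1$. First I would note that $\widehat{L}$ being invertible makes $\widehat{X}^{-1}=\begin{bmatrix} I_k & 0 \\ 0 & \widehat{L}^{-1}\end{bmatrix}$ legitimate, and carry out the $2\times 2$ block product $\widehat{X}\,\widetilde{G}\,\widehat{X}^{-1}$ directly from the form of $\widetilde{G}$ in \eqref{equ:G22block}. This immediately gives
\begin{align*}
\widehat{X}\widetilde{G}\widehat{X}^{-1}=\begin{bmatrix}\widetilde{G}_{11} & \widetilde{G}_{12}\widehat{L}^{-1} \\ \widehat{L}eu_1^T & \widehat{L}eu_2^T\widehat{L}^{-1}\end{bmatrix},
\end{align*}
which is the first equality in \eqref{equ:widetildeXGX-1-2} (the $\widehat{L}$ in the $(2,2)$ block arising on the left of $u_2^T$).

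For the second equality, the key observation is that $\widehat{L}e=e_1$ forces $\widehat{L}^{-1}e_1=e$; this is exactly the analogue of the identity $\widetilde{L}^{-1}e_1=(I+\widehat{e}e_1^T)e=e$ invoked in Lemma~\ref{lem:widetildeG}, and it is the only structural fact about $\widehat{L}$ the argument needs. Substituting $\widehat{L}e=e_1$ turns the bottom-left block into $e_1u_1^T$ and the bottom-right block into $e_1u_2^T\widehat{L}^{-1}$, so every row of $\widehat{X}\widetilde{G}\widehat{X}^{-1}$ below the first $k+1$ is zero, producing the vanishing bottom block-row. Splitting $\widehat{L}^{-1}$ columnwise via $I_{n-k}=\begin{bmatrix}e_1 & e_2 & \cdots & e_{n-k}\end{bmatrix}$,
\begin{align*}
\widehat{L}^{-1}=\begin{bmatrix}\widehat{L}^{-1}e_1 & \widehat{L}^{-1}\begin{bmatrix}e_2&\cdots&e_{n-k}\end{bmatrix}\end{bmatrix}=\begin{bmatrix}e & \widehat{L}^{-1}\begin{bmatrix}e_2&\cdots&e_{n-k}\end{bmatrix}\end{bmatrix},
\end{align*}
and feeding this into $\widetilde{G}_{12}\widehat{L}^{-1}$ and into the first row $u_2^T\widehat{L}^{-1}$ of the bottom-right block then reads off precisely the right-hand side of \eqref{equ:widetildeXGX-1-2}.

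I do not expect any real obstacle: the computation is routine block algebra, and the entire content is the remark that the proof of Lemma~\ref{lem:widetildeG} never uses anything about $\widetilde{L}$ beyond $\widetilde{L}e=e_1$ (equivalently $\widetilde{L}^{-1}e_1=e$). The only point needing care is bookkeeping---applying the column partition consistently on the right of $\widehat{L}^{-1}$ and reading the leading $(k+1)\times(k+1)$ block in the order $\widetilde{G}_{11},\ \widetilde{G}_{12}e,\ u_1^T,\ u_2^Te$ so that it matches \eqref{equ:G1G2}. (One could alternatively observe that conjugation by $\widehat{X}$ and by $\widetilde{X}$ differ by conjugation by $\begin{bmatrix} I_k & 0 \\ 0 & \widehat{L}\widetilde{L}^{-1}\end{bmatrix}$, whose lower block fixes $e_1$ since $\widehat{L}\widetilde{L}^{-1}e_1=\widehat{L}e=e_1$; but the direct computation above is shorter.)
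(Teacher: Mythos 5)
Your proof is correct and is essentially the argument the paper intends: the paper omits the proof of this lemma, stating it is "similar to that of Lemma \ref{lem:widetildeG}," and your direct block computation using only $\widehat{L}e=e_1$ (hence $\widehat{L}^{-1}e_1=e$) is exactly that argument. (Incidentally, you also silently correct the typo $\widetilde{L}eu_2^T\widehat{L}^{-1}$ in the statement's $(2,2)$ block, which should read $\widehat{L}eu_2^T\widehat{L}^{-1}$.)
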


The proof process of this Lemma is similar to that of Lemma \ref{lem:widetildeG}, so we omit it here.

\section{Alternative Proofs of Theorems \ref{thm:ipsen1} and \ref{thm:ipsen2}}
\subsection{An alternative proof of Theorem \ref{thm:ipsen1}}
Now we are ready to present an alternative proof of Theorem \ref{thm:ipsen1} below.

\begin{proof}

By separating the leading $(k+1)\times (k+1)$ submatrix, according to Lemma \ref{lem:widetildeG-gen}, we rewrite
\begin{equation}\label{equ:widetilde_X widetilde_G widetilde_X}
\widehat{X}\widetilde{G}\widehat{X}^{-1} =
\begin{bmatrix} \widetilde{G}^{(1)} & \widetilde{G}^{(2)} \\
0 &0\end{bmatrix},
\end{equation}
where
\begin{equation*}
\widetilde{G}^{(1)}  =
\begin{bmatrix}\widetilde{G}_{11} & \widetilde{G}_{12}e \\
u_1^T & u_2^T e \end{bmatrix},\quad \mbox{and}\quad
\widetilde{G}^{(2)}=
\begin{bmatrix}
\widetilde{G}_{12} \\
u_2^T \end{bmatrix}\widehat{L}^{-1}
\begin{bmatrix}e_2&\cdots&e_{n-k} \end{bmatrix}.
\end{equation*}

\end{proof}
We remark that the proof process can be achieved if the invertible similarity transformation  matrix $\widehat{L}$ satisfies the condition $\widehat{L}e=e_1$.

\subsection{An alternative proof of Theorem \ref{thm:ipsen2}}
In this subsection, instead of a more complicated and denser matrix $L$ in Theorem \ref{thm:ipsen1}, we choose a general invertible matrix $\widehat{L}$ satisfying $\widehat{L}e=e_1$ to present an alternative proof of Theorem \ref{thm:ipsen2}. 
Now our proof are shown below.

\begin{proof}
According to theorem \ref{thm:ipsen1}, the stochastic matrix $\widetilde{G}^{(1)}$ of order $k+1$ has the same nonzero eigenvalues as $\widetilde{G}$. From \eqref{eqn:lumpedHatG(1)} and \eqref{equ:widetilde_X widetilde_G widetilde_X}, we can obtain that $\left[\sigma^T\right.  \left. \sigma^T \widetilde{G}^{(2)}  \right]$ is an eigenvector for $\widehat{X}\widetilde{G} \widehat{X}^{-1}$ associated with the eigenvalue $\lambda=1$. Therefore,
\begin{align}
\widehat{\pi}^T=\begin{bmatrix}\sigma^T&\sigma^T \widetilde{G}^{(2)} \end{bmatrix}\widehat{X}
\end{align}
is an eigenvector of $\widetilde{G}$ associated with $\lambda=1$. Since $\widetilde{G}$ and $\widetilde{G}^{(1)}$ have the same nonzero eigenvalues, and the principle eigenvalue of $\widetilde{G}$ is distinct, the stationary probability distribution $\sigma$ of $\widetilde{G}^{(1)}$ is unique. We repartition
\begin{align}
\widehat{\pi}^T=
\begin{bmatrix}
\sigma_{1:k}^T
&\begin{bmatrix}\sigma_{k+1}  &  \sigma^T \widetilde{G}^{(2)} \end{bmatrix}
\end{bmatrix}
\begin{bmatrix}I_k&0\\
0& \widehat{L}
\end{bmatrix}.
\end{align}
Multiplying out
\begin{align}
\widehat{\pi}^T=\begin{bmatrix}\sigma_{1:k}^T
&\begin{bmatrix}\sigma_{k+1} & \sigma^T \widetilde{G}^{(2)} \end{bmatrix}
\widehat{L} \end{bmatrix}.
\end{align}

Partitioning $\widehat{L} = \begin{bmatrix}
\widehat{L}_{11}&\widehat{L}_{12}\\
\widehat{L}_{21}&\widehat{L}_{22}
\end{bmatrix}$ with a scalar $\widehat{L}_{11}$, a row vector $\widehat{L}_{12}$, a column vector $\widehat{L}_{21}$, and a square submatrix $\widehat{L}_{22}$, hence,
\begin{align}
&\begin{bmatrix}
\sigma_{k+1} & \sigma^T \widetilde{G}^{(2)}
\end{bmatrix}
\begin{bmatrix}
\widehat{L}_{11}&\widehat{L}_{12}\\
\widehat{L}_{21}&\widehat{L}_{22}
\end{bmatrix}
=
\begin{bmatrix}
\sigma_{k+1}\widehat{L}_{11} + \sigma^T \widetilde{G}^{(2)}\widehat{L}_{21} &\sigma_{k+1}\widehat{L}_{12}+\sigma^T\widetilde{G}^{(2)}\widehat{L}_{22}
\end{bmatrix}\nonumber
\\=&
\begin{bmatrix}
\sigma^T\begin{bmatrix}\widetilde{G}_{12}\\u_2^T
\end{bmatrix}\widehat{L}^{-1}e_1\widehat{L}_{11}+
\sigma^T
\begin{bmatrix}\widetilde{G}_{12} \\
u_2^T \end{bmatrix}\widehat{L}^{-1}
\begin{bmatrix}e_2 &\cdots &e_{n-k}\end{bmatrix}\widehat{L}_{21} &\sigma_{k+1}\widehat{L}_{12}+\sigma^T\widetilde{G}^{(2)}\widehat{L}_{22}
\end{bmatrix}\nonumber
\\=&
\begin{bmatrix}
\sigma^T\begin{bmatrix}\widetilde{G}_{12}\\u_2^T
\end{bmatrix}\widehat{L}^{-1}e_1\widehat{L}_{11}+\sigma^T
\begin{bmatrix}\widetilde{G}_{12} \\
u_2^T \end{bmatrix}\widehat{L}^{-1}
\begin{bmatrix}e_2 &\cdots &e_{n-k}\end{bmatrix} \widehat{L}_{21} &
\sigma^T\begin{bmatrix}\widetilde{G}_{12}\\u_2^T
\end{bmatrix}\widehat{L}^{-1}e_1\widehat{L}_{12}+\sigma^T \widetilde{G}^{(2)}\widehat{L}_{22}
\end{bmatrix}
\nonumber
\\=&
\begin{bmatrix}
\sigma^T\begin{bmatrix}\widetilde{G}_{12}\\u_2^T
\end{bmatrix}\widehat{L}^{-1}\begin{bmatrix}\widehat{L}_{11}\\\widehat{L}_{21}
\end{bmatrix}
&\sigma^T\begin{bmatrix}\widetilde{G}_{12}\\u_2^T
\end{bmatrix}\widehat{L}^{-1}\begin{bmatrix}\widehat{L}_{12}\\ \widehat{L}_{22}\end{bmatrix}
\end{bmatrix}\nonumber
\\=&\sigma^T\begin{bmatrix}\widetilde{G}_{12}\\u_2^T\end{bmatrix}
\end{align}
due to the fact that
\begin{align}
e=\widehat{L}^{-1}e_1, \quad
\sigma_{k+1}=\sigma^T
\begin{bmatrix}
\widetilde{G}_{12}e\\u_2^Te\end{bmatrix},\quad\mbox{and}\quad
\widetilde{G}^{(2)}=
\begin{bmatrix}
\widetilde{G}_{12} \\
u_2^T \end{bmatrix}\widehat{L}^{-1}
\begin{bmatrix}e_2&\cdots&e_{n-k} \end{bmatrix}.
\end{align}
Hence,
\begin{equation}
\widehat{\pi}^T =
\begin{bmatrix} \sigma^T_{1:k} & \sigma^T
\begin{bmatrix}\widetilde{G}_{12} \\ u_2^T\end{bmatrix}\end{bmatrix}.
\end{equation}
As discussed above and $\widetilde{\pi}$ is unique, we conclude that $\widehat{\pi}=\widetilde{\pi}$ if $e^T\widehat{\pi}=1$.
\end{proof}

\begin{remark}
A class of similarity transformation matrices $\widetilde{X}$ can provide an alternative proof of theorems \ref{thm:ipsen1} and \ref{thm:ipsen2}, and the proof process can be simpler and easier. Besides, Ipsen and Selee in \cite{Ipsen2007simax} extended the single class of dangling nodes to $m\geq 1$ different classes (Section 3.4 in \cite{Ipsen2007simax}), we stress that the concrete invertible matrices $L_1$ and $L_2$ constructed in \cite{Ipsen2007simax} can be also generalized. That is, if invertible matrices $\widehat{L}_1$ and $\widehat{L}_2$ satisfy the conditions $\widehat{L}_1e=e_1$ and $\widehat{L}_2e=e_1$, the corresponding conclusion and proof process still hold.
\end{remark}

\begin{remark}
In \cite{Lin2009computing}, the nondangling nodes are further classified into strongly nondangling nodes and weakly nondangling nodes, and thus the Google matrix has an $3\times 3$ block structure. The transformation matrix $L=I_{n-k}-\frac{1}{n-k}\widehat{e}e^T$, where $\widehat{e}=e-e_1=\begin{bmatrix}0&1&\cdots&1\end{bmatrix}$, which is used during the proof process of Theorem 2.1 in \cite{Lin2009computing}, can also be replaced by an invertible matrix $\widehat{L}$ satisfying the condition in \eqref{eqn:L-condition}. Meanwhile, after adopting the above new class of matrix, the main results in \cite{Lin2009computing} also hold.
\end{remark}

\begin{remark}
A minimal irreducible adjustment of PageRank was proposed in \cite{SongLi2013jamc}, they used an effective blocking and lumping algorithm for speeding up the PageRank computation. The new class of similarity transformation matrix $\widehat{L}$ can also play an important role during the proof process of Theorems 2 and 3 in \cite{SongLi2013jamc}. They not only derive the same theoretical results, but also shorten and simplify the process of theoretical analysis. 
\end{remark}

In the next subsection, some necessary preliminaries are introduced. Then from the matrix decomposition form, we validate Theorem \ref{thm:ipsen2}.

\subsection{Proof of Theorem \ref{thm:ipsen2} from matrix decomposition viewpoint}

From \cite{Berman1994siam,Ipsen2006simax}, we know that $I-\widetilde{G}$ is an M-matrix, as well as irreducible and singular. Hence, its nontrivial leading principle submatrix $I-\widetilde{G}_{11}$ is nonsingular \cite{Berman1994siam,Ipsen2006simax}. Therefore, Ipsen and Kirkland in \cite{Ipsen2006simax} expressed $\pi$ in terms of the block LDU decomposition
\begin{align}\label{eq:LDU}
I-\widetilde{G}&=
\begin{bmatrix}
I&0\\
-\widetilde{G}_{21}\left(I-\widetilde{G}_{11}\right)^{-1}&I
\end{bmatrix}
\begin{bmatrix}
I-\widetilde{G}_{11}&0\\
0&I-\widehat{S}
\end{bmatrix}
\begin{bmatrix}
I&-\left(I-\widetilde{G}_{11}\right)^{-1}\widetilde{G}_{12}\\
0&I
\end{bmatrix}
\end{align}
where $\widehat{S}=\widetilde{G}_{22}+\widetilde{G}_{21}(I-\widetilde{G}_{11})^{-1}
\widetilde{G}_{12}$. We know that $I-\widehat{S}$ is the Schur complement of $I-\widetilde{G}_{11}$ in $I-\widetilde{G}$. The matrix $\widehat{S}$ is also known as the stochastic complement of $\widetilde{G}_{22}$ in $P$. It is a special case of a Perron complement in the context of nonnegative matrices \cite{meyer1989stochastic,Langville2006google}. From \eqref{eq:LDU},  $\widetilde{\pi}^T(I-\widetilde{G})=0 $ if and only if $$\widetilde{\pi}^T\begin{bmatrix}
I&0\\
-\widetilde{G}_{21}\left(I-\widetilde{G}_{11}\right)^{-1}&I
\end{bmatrix}
\begin{bmatrix}
I-\widetilde{G}_{11}&0\\
0&I-\widehat{S}
\end{bmatrix}=0.$$ As $U$ is nonsingular, hence,
\begin{align}\label{eq:pi2pi1}
\widetilde{\pi}_2^T\widehat{S}=\widetilde{\pi}_2^T,\quad \mbox{and}\quad \widetilde{\pi}_1^T=\widetilde{\pi}_2^T\widetilde{G}_{21}
\left(I-\widetilde{G}_{11}\right)^{-1},
\end{align}
where $\widetilde{\pi}_2$ is a stationary distribution for the smaller matrix $\widehat{S}$. Keep in mind that $$\widehat{S}=\widetilde{G}_{22}+\widetilde{G}_{21}
\left(I-\widetilde{G}_{11}\right)^{-1}\widetilde{G}_{12},$$ then \eqref{eq:pi2pi1} can be reformulated as
\begin{align}\label{eq:pi2pi1couple}
\widetilde{\pi}_2^T=\widetilde{\pi}_1^T\widetilde{G}_{12}
\left(I-\widetilde{G}_{22}\right)^{-1},\quad \mbox{and}\quad \widetilde{\pi}_1^T=\widetilde{\pi}_2^T\widetilde{G}_{21}
\left(I-\widetilde{G}_{11}\right)^{-1}.
\end{align}

By comparing the differences between  $\widetilde{\pi}^T\begin{bmatrix}\widetilde{G}_{11} &\widetilde{G}_{12}\\
eu_1^T&eu_2^T\end{bmatrix}=\widetilde{\pi}^T$, with $\widetilde{\pi}\geq 0$, $\widetilde{\pi}^Te=1$ and
$\sigma^T\begin{bmatrix}\widetilde{G}_{11} &\widetilde{G}_{12}e\\
u_1^T&u_2^Te\end{bmatrix}=\sigma^T$, with $\sigma\geq 0$, $\sigma^Te=1$,
we begin our new proof based on above discussions and notations,

\begin{proof}

As \begin{align*}
\widetilde{X}\widetilde{G}\widetilde{X}^{-1}=
\begin{bmatrix}G^{(1)} &G^{(2)}\\
0&0 \end{bmatrix},\quad\mbox{where} \quad
\widehat{X}=\begin{bmatrix}
I_k&0\\
0&\widehat{L}
\end{bmatrix},
\widehat{L}e=e_1, \mbox{$\widehat{L}$ is an invertible matrix}\quad
\end{align*}
then $\widehat{\pi}= \begin{bmatrix}\sigma^T &\sigma^T\widetilde{G}^{(2)}\end{bmatrix}$
is an eigenvalue of $\widetilde{G}$ associated with eigenvalue $\lambda_1=1$ and is a multiple of probability distribution of $\widetilde{G}$. Since $\widetilde{G}$ and $\widetilde{G}^{(1)}$ have the same nonzero spectral, therefore $\widetilde{G}^{(1)}$ has a unique dominant eigenvector by considering the properties of $\widetilde{G}$. If $$e^T\widehat{\pi}=e^T\begin{bmatrix}\sigma^T &\sigma^T\widetilde{G}^{(2)}\end{bmatrix}=1,$$ we can conclude that the PageRank vector $\widetilde{\pi}$ of $\widetilde{G}$ equals $\widehat{\pi}$. Based on the assumption that $e^T\widehat{\pi}=1$, if we let the leading nondangling node ranking $\widetilde{\pi}_1^T= \widehat{\pi}_{1:k}= \sigma_{1:k}^T$, then from \eqref{eq:pi2pi1couple}, we obtain that
\begin{align*}
\widetilde{\pi}_2^T=\widetilde{\pi}_1^T
\widetilde{G}_{12}(I-\widetilde{G}_{22})^{-1} \quad \mbox{and}\quad
\widetilde{\pi}_1^T=
\widetilde{\pi}_2^T\widetilde{G}_{21}(I-\widetilde{G}_{11})^{-1}.
\end{align*}
Thus the tailing $n-k$ elements of $\widetilde{\pi}$ satisfies  $\widetilde{\pi}_2^T(I-\widetilde{G}_{22})=\sigma_{1:k}^T\widetilde{G}_{12}$, therefore,
$$\widetilde{\pi}_2^T-\widetilde{\pi}_2^T eu_2^T= \sigma_{1:k}^T\widetilde{G}_{12}.$$
Finally, we get the dangling node ranking
$$\widetilde{\pi}_2^T=\sigma_{k+1}u_2^T +\sigma_{1:k}^T\widetilde{G}_{12}=\sigma \begin{bmatrix}\widetilde{G}_{12}\\ u_2^T\end{bmatrix}. $$
Hence, we complete the proof due to the assumption that $\widehat{\pi}^Te=1$.

\end{proof}

\section{Conclusion}

In this paper, after a brief introduction of the spectral ranking history and the PageRank model, we have presented a class of similarity transformation matrices which are used in the proof of lumping PageRank computation problems in \cite{Ipsen2007simax}. This class of new matrices are consisted of structure  like $\begin{bmatrix}I&0\\0&\widehat{L}\end{bmatrix}$, where $I$ denotes a leading $k\times k$ identity matrix and $\widetilde{L}$ is an invertible matrix of order $n-k$ satisfying $\widehat{L}e=e_1$, where $e\in \mathbb{R}^{n-k}$ is a vector of all ones and $e_1$ is a canonical coordinate vector (i.e., the first column of the identity matrix $I_{n-k}$). As a result, the proof process due to Ipsen and Selee is simplified and shortened.

In addition, we also provide another proof of theorem \ref{thm:ipsen2} from the matrix decomposition viewpoint. The theorem \ref{thm:ipsen2} shows us the relationship between rankings of nondangling nodes and dangling nodes. That is, the rankings of nondangling nodes can be computed independently from that of dangling nodes; while rankings of dangling nodes depends on the ranking results of nondangling nodes. Finally, 
the spectral properties of a stochastic have an important impact on the power method's convergence. As a result, further researches may include how to accelerate the PageRank computation by further considering 
the properties of a stochastic matrix, especially the spectral properties of a stochastic or Google matrix.

\section*{References}
\bibliography{submit2arxiv}

\end{document}